\theoremstyle{definition}
\newtheorem{definition}{Definition}[section]
\theoremstyle{plain}
\newtheorem{lemma}[definition]{Lemma}
\newtheorem{theorem}[definition]{Theorem}
\newtheorem{proposition}[definition]{Proposition}
\newtheorem{corollary}[definition]{Corollary}
\theoremstyle{remark}
\newcommand{\myds}{\operatorname{D}_\Sigma}
\newcommand{\myint}{\operatorname{int}}
\newcommand{\mycl}{\operatorname{cl}}
\newcommand{\mylc}{\operatorname{lc}}
\newcommand{\myfdim}{\operatorname{f.dim}}
\newcommand{\myim}{\operatorname{Im}}
\begin{document}

\title[locally o-minimal  open core]{locally o-minimal open core}
\author[M. Fujita]{Masato Fujita}
\address{Department of Liberal Arts,
Japan Coast Guard Academy,
5-1 Wakaba-cho, Kure, Hiroshima 737-8512, Japan}
\email{fujita.masato.p34@kyoto-u.jp}

\begin{abstract}
We demonstrate that the open core of a definably complete expansion of a densely linearly ordered abelian group is locally o-minimal if and only if any definable closed subset of $R$ is either discrete or contains a nonempty open interval.
Here, the notation $R$ denotes the universe of the original structure.
\end{abstract}

\subjclass[2010]{Primary 03C64}

\keywords{open core, locally o-minimal structure}

\maketitle

\section{Introduction}\label{sec:intro}
The open core of an expansion of a linear order is its reduct generated by its definable open sets. 
The notion of open core is first introduced in \cite{MS} and Dolich et al. gave a sufficient condition for the structure having an o-minimal open core in \cite{DMS}.

Local o-minimality was first proposed in \cite{TV} as a local variant of o-minimality and it has been studied in \cite{F, Fuji, Fuji3, Fuji5, KTTT}.
To the best of the author's knowledge, Fornasiero first investigated the structures having locally o-minimal open cores.
He gave necessary and sufficient conditions for a definably complete structure having a locally o-minimal open core in \cite{F}.
He assumed that the structure is an expansion of an ordered field.
The author considered the problem under a relaxed algebraic assumption.
He provided a sufficient condition for a definably complete expansion of a densely linearly ordered abelian group having a uniformly locally o-minimal open core of the second kind in \cite{Fuji4}.
Definably complete uniformly locally o-minimal structures of the second kind satisfy the additional condition which general locally o-minimal structures do not satisfy. 
They admit local definable cell decomposition though general locally o-minimal structures do not.
See \cite{Fuji} for more details on uniformly locally o-minimal structures of the second kind.

The purpose of this paper is to find a necessary and sufficient condition for a definably complete structure having a locally o-minimal open core under the same assumption employed in \cite{Fuji4}.
The following is the main theorem of this paper.

\begin{theorem}\label{thm:main}
Let $\mathcal R=(R,<,+,0,\ldots)$ be a definably complete expansion of a densely linearly ordered abelian group.
The following are equivalent:
\begin{enumerate}
\item[(1)] Any definable closed subset of $R$ is either discrete or contains a nonempty open interval.
\item[(2)] The open core of $\mathcal R$ is locally o-minimal.
\end{enumerate}
Furthermore, sets definable in the open core are constructible when one of (and both of) the above equivalent conditions is satisfied.
Recall that a {constructible} set is a finite boolean combination of open sets.
\end{theorem}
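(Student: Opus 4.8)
The plan is to prove the two implications of the equivalence separately, treating the final constructibility assertion not as an afterthought but as the main engine for the harder direction. First I would dispatch the easy implication (2) $\Rightarrow$ (1). Let $C\subseteq R$ be closed and definable in $\mathcal R$; since $C$ is the complement of the definable open set $R\setminus C$, it is definable in the open core. Assuming local o-minimality, suppose $C$ is not discrete and pick a non-isolated point $a\in C$, which lies in $C$ because $C$ is closed. Local o-minimality supplies an open interval $I\ni a$ with $C\cap I$ a finite union of points and open intervals; as $a$ is not isolated, $C\cap I$ cannot be a finite set of points, so it contains an open interval, and hence so does $C$.

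The substance is (1) $\Rightarrow$ (2), which I would route through the constructibility claim, so I would first record the bridge: \emph{constructibility implies (2)}. The key one-variable observation is that if $U\subseteq R$ is definable and open, then its frontier $\mycl(U)\setminus U$ is a definable closed set containing no open interval, since any interval inside it would be disjoint from $U$ yet contained in $\mycl(U)$, which is impossible. By the dichotomy (1) such a set is discrete, and a closed discrete set has no limit points, hence is locally finite; thus $U$ is, near every point, a finite union of open intervals. Granting that every open-core-definable set is constructible, a one-dimensional such set $X$ is a finite boolean combination of definable open subsets $U_1,\dots,U_k$ of $R$. Near any $a$ I would choose an interval $I$ on which each frontier $\mycl(U_j)\setminus U_j$ meets $I$ in at most the single point $a$; then each $U_j\cap I$, and therefore any boolean combination of the $U_j\cap I$, is a finite union of points and intervals. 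This is exactly local o-minimality, so (2) follows.

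It remains to prove that, under (1), every open-core-definable set is constructible, and this is the heart of the matter. Since the open core is the structure generated by the definable open sets, and the constructible sets (finite boolean combinations of definable open sets) already contain every definable open set and are closed under boolean operations, it suffices to show that the constructible sets are closed under coordinate projection; this is a quantifier-elimination statement for the open core. I would argue by induction on the number of variables, reducing to a single projection $\pi\colon R^{n+1}\to R^n$. Interiors and closures cause no trouble: the closure of any $\mathcal R$-definable set is definable and closed, hence the complement of a definable open set and thus constructible, and dually for interiors. The genuine difficulty is the part of $\pi(A)$ not already captured by the open set $\myint(\pi(A))$. The plan is to control the fibres of a constructible $A$ uniformly: each fibre is a boolean combination of open subsets of $R$ whose frontiers are discrete by the one-variable lemma above, and definable completeness lets me define the boundary data of these fibres by suitable suprema and infima. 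Feeding the resulting closed definable subsets of $R$ back into the dichotomy (1) keeps this boundary data from accumulating, which is precisely what is needed to present $\pi(A)$ as a finite boolean combination of definable open sets.

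The step I expect to be the main obstacle is this uniformity in the projection: the local o-minimality of each fibre is immediate from (1), but upgrading pointwise finiteness of fibre frontiers to a single, globally constructible description of $\pi(A)$ is where definable completeness and the full force of the dichotomy (1) must be combined, rather than applied fibrewise. Once closure under projection is established, constructibility of all open-core-definable sets follows, yielding the final assertion of the theorem and, through the bridge of the second paragraph, the implication (1) $\Rightarrow$ (2).
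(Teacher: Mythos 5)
Your treatment of $(2)\Rightarrow(1)$ is correct, and your reduction of $(1)\Rightarrow(2)$ to the constructibility assertion (the ``bridge'' via discreteness of frontiers of definable open subsets of $R$ and definable connectedness of intervals) is sound and close in spirit to the paper's use of Corollary \ref{cor:basic1} and Lemma \ref{lem:local}. But the heart of the theorem --- that every set definable in the open core is constructible --- is not proved in your proposal; it is only planned, and the step you yourself flag as ``the main obstacle'' (passing from fibrewise tameness to a single global constructible description of a projection) is exactly the part that carries all of the paper's technical content. As written, this is a genuine gap, not a routine detail.

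Concretely, the missing idea is the $\myds$-machinery. Rather than trying to show directly that constructible sets are closed under coordinate projection, the paper introduces the class of $\myds$-sets (unions of doubly-indexed monotone families of closed bounded definable sets), which is closed under projections, fibers, and finite unions and intersections essentially for free (Lemma \ref{lem:quo}) and contains all constructible definable sets; the hard theorem (Theorem \ref{thm:constructible}) is then that every $\myds$-set is constructible, which closes the loop and identifies the open-core-definable sets with the constructible ones. The uniformity you are missing is obtained there by: (i) Lemma \ref{lem:basic1} and its corollaries, which use hypothesis (1) to show that monotone unions of discrete closed subsets of $R$ stay discrete and closed, giving properties $(a)$ and $(b)$; (ii) Lemma \ref{lem:key}, an induction transferring ``nonempty interior'' between a $\myds$-set and some member of its defining family; (iii) definable compactness for filtered collections (Proposition \ref{prop:def_compact}), which is what lets one find, for each fibre, a single $X\langle r,s\rangle$ containing it; and (iv) an induction on the \emph{full dimension} $\myfdim(X)$ (not on the number of variables), analyzing the locus $\mylc(X)$ of points near which $X$ is locally closed and deriving a contradiction via Corollaries \ref{cor:key3} and \ref{cor:key4} if its complement projects onto a set with interior. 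Your suprema-and-infima plan for encoding ``boundary data'' of fibres does not by itself produce this; in particular, without something like Proposition \ref{prop:def_compact} and the $(A)_n$/$(B)_n$ interior-transfer lemma, there is no mechanism to convert the pointwise dichotomy of (1) into the global statement that $\pi(A)$ is constructible.
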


Tools and techniques for proving this theorem were already developed in the previous studies \cite{DMS, F, Fuji3, Fuji4}.
In Section \ref{sec:preliminary}, we recall the definitions and results given in the previous works.
Two key lemmas are demonstrated in Section \ref{sec:key} mimicing the arguments in \cite{Fuji3, Fuji4}.
We complete the proof of the theorem in Section \ref{sec:proof}.

We introduce the terms and notations used in this paper.
The term `definable' means `definable in the given structure with parameters' in this paper.
For any set $X \subset R^{m+n}$ definable in a structure $\mathcal R=(R,\ldots)$ and for any $x \in R^m$, the notation $X_x$ denotes the fiber defined as $\{y \in R^n\;|\; (x,y) \in X\}$.
For a linearly ordered structure $\mathcal R=(R,<,\ldots)$, an open interval is a definable set of the form $\{x \in R\;|\; a < x < b\}$ for some $a,b \in R$.
It is denoted by $(a,b)$ in this paper.
An open box in $R^n$ is the direct product of $n$ open intervals.
A \textit{CBD} set is a closed, bounded and definable set.
Let $A$ be a subset of a topological space.
The notations $\myint(A)$ and $\mycl(A)$ denote the interior and the closure of the set $A$, respectively.

\section{Preliminary}\label{sec:preliminary}
We recall the definitions and the assertions introduced in the previous studies.

\begin{definition}[\cite{M,TV}]
An expansion of a densely linearly ordered set without endpoints $\mathcal R=(R,<,\ldots)$ is \textit{definably complete} if any definable subset $X$ of $R$ has the supremum and  infimum in $R \cup \{\pm \infty\}$.
The structure $\mathcal R$ is \textit{locally o-minimal} if, for every definable subset $X$ of $R$ and for every point $a\in R$, there exists an open interval $I$ containing the point $a$ such that $X \cap I$ is  a finite union of points and open intervals.
\end{definition}

Dolich et al.\ used $\myds$-sets in \cite{DMS}.
The author also used them in \cite{Fuji4}.
They play an important role also in this paper.
\begin{definition}[$\myds$-sets]
Consider an expansion of a densely linearly ordered abelian group $\mathcal R=(R,<,+,0\ldots)$.

A \textit{parameterized family} of definable sets $\{X{\langle x \rangle}\}_{x \in S}$ is the family of the fibers of a definable set; that is, there exists a definable set $\mathcal X$ with $X{\langle x \rangle}=\mathcal X_x$ for all $x$ in a definable set $S$.
A parameterized family $\{X{\langle r,s\rangle}\}_{r>0,s>0}$ of CBD subsets $X{\langle r,s\rangle}$ of $R^n$ is called a \textit{$\myds$-family} if $X{\langle r,s\rangle} \subseteq X{\langle r',s\rangle}$ and  $X{\langle r,s'\rangle } \subseteq X{\langle r,s\rangle}$ whenever $r < r'$ and $s < s'$.
Note that $X{\langle r,s\rangle}$ is not necessarily strictly contained in $X{\langle r',s\rangle}$.
It is the same for the inclusion $X{\langle r,s'\rangle } \subseteq X{\langle r,s\rangle}$.
A definable subset $X$ of $R^n$ is a \textit{$\myds$-set} if $X = \displaystyle\bigcup_{r>0,s>0} X{\langle r,s \rangle}$ for some $\myds$-family $\{X{\langle r,s\rangle}\}_{r>0,s>0}$.
\end{definition}

The following two lemmas are found in \cite{DMS,M}.
We use Lemma \ref{lem:quo} here and there without notice in this paper.
\begin{lemma}\label{lem:quo}
Consider a definably complete expansion of a densely linearly ordered abelian group $\mathcal R$.
The following assertions are true:
\begin{enumerate}[(1)]
\item The projection image of a $\myds$-set is $\myds$.
\item Fibers, finite unions and finite intersections of $\myds$-sets are $\myds$.
\item Every constructible definable set is $\myds$.
\end{enumerate}
\end{lemma}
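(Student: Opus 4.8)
The plan is to verify the three assertions in the order (2), (1), (3), since the closure properties in (2) make the later parts essentially formal once the two genuinely topological inputs are isolated.

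For (2) I would build the required $\myds$-families termwise. Given $\myds$-sets $X=\bigcup_{r,s}X\langle r,s\rangle$ and $Y=\bigcup_{r,s}Y\langle r,s\rangle$ in $R^n$, set $(X\cup Y)\langle r,s\rangle=X\langle r,s\rangle\cup Y\langle r,s\rangle$ and $(X\cap Y)\langle r,s\rangle=X\langle r,s\rangle\cap Y\langle r,s\rangle$. In each case the new fibers are CBD (finite unions and intersections of CBD sets are CBD), and the two monotonicity requirements are inherited coordinatewise from those of $\{X\langle r,s\rangle\}$ and $\{Y\langle r,s\rangle\}$. The union identity for $X\cup Y$ is immediate; for the intersection the only point needing care is that $\bigcup_{r,s}(X\langle r,s\rangle\cap Y\langle r,s\rangle)=X\cap Y$. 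Given $p\in X\cap Y$, choose $(r_1,s_1)$ and $(r_2,s_2)$ with $p\in X\langle r_1,s_1\rangle$ and $p\in Y\langle r_2,s_2\rangle$; since the families are increasing in $r$ and decreasing in $s$, one gets $p\in X\langle r,s\rangle\cap Y\langle r,s\rangle$ for $r=\max(r_1,r_2)$ and $s=\min(s_1,s_2)$. Fibers are handled identically: for $X\subseteq R^{m+n}$ and $x\in R^m$ put $(X_x)\langle r,s\rangle=(X\langle r,s\rangle)_x$, noting that a fiber of a CBD set is CBD and that fibering commutes with the defining union.

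For (1), let $X\subseteq R^{m+n}$ be $\myds$ with family $\{X\langle r,s\rangle\}$ and let $\pi$ be a coordinate projection. The candidate family is $\pi(X)\langle r,s\rangle=\pi(X\langle r,s\rangle)$. Monotonicity passes through $\pi$, and $\bigcup_{r,s}\pi(X\langle r,s\rangle)=\pi\bigl(\bigcup_{r,s}X\langle r,s\rangle\bigr)=\pi(X)$. The one nontrivial point is that $\pi(X\langle r,s\rangle)$ is again CBD. Boundedness is clear; closedness is the content of the definable-compactness property of definably complete structures, namely that the continuous definable image of a closed bounded definable set is closed and bounded, which I would import from \cite{DMS,M}. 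Concretely, to witness that a boundary point $a$ of $\pi(X\langle r,s\rangle)$ lies in the image, one selects a canonical section of $\pi$ over the image using the infima of the closed bounded fibers (which exist by definable completeness) and extracts a limit.

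For (3) it suffices to show that every open definable set and every closed definable set is $\myds$: a constructible set is a finite union of sets of the form $U\cap F$ with $U$ open and $F$ closed, because the finite unions of such sets form a boolean algebra containing all open sets and hence coincide with the constructible sets, after which (2) finishes the argument. A closed definable $F\subseteq R^n$ is captured by the box truncation $F\langle r,s\rangle=F\cap[-r,r]^n$ (independent of $s$), which is CBD, increasing in $r$, and exhausts $F$. For an open definable $U$, write $C=R^n\setminus U$ and use the sup-norm distance $d(y,C)=\inf\{\,\|y-c\|:c\in C\,\}$ with $\|z\|=\max_i\max\{z_i,-z_i\}$; definable completeness guarantees the infimum exists and that $d(\cdot,C)$ is a continuous definable function with $\{y:d(y,C)>0\}=U$. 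Then $U\langle r,s\rangle=\{y\in[-r,r]^n:d(y,C)\ge s\}$ is CBD (closedness from continuity of $d(\cdot,C)$), increasing in $r$, decreasing in $s$, and $\bigcup_{r,s}U\langle r,s\rangle=U$, since each $y\in U$ lies in $U\langle r,s\rangle$ for large $r$ and any $0<s\le d(y,C)$.

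The main obstacle is the closedness claim in (1). Boundedness, monotonicity, and the union identities are purely formal, but it is precisely here that definable completeness is indispensable and must be channeled through the definable-compactness machinery rather than a naive limit argument. A secondary, more constructive point is arranging in (3) that $d(\cdot,C)$ be well defined and continuous, which again rests on the existence of infima supplied by definable completeness.
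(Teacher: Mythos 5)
Your proposal is correct, and it essentially reconstructs the standard arguments that the paper simply outsources by citation to \cite{DMS} and \cite{M} (the paper's ``proof'' of this lemma is nothing but three references). You correctly isolate the one genuinely nontrivial input --- that coordinate projections of CBD sets are again closed, i.e.\ definable compactness --- and import it from the same sources; the termwise constructions of the $\myds$-families for unions, intersections, fibers, projections, closed sets (box truncation) and open sets (sup-norm distance to the complement) are exactly the arguments behind those citations.
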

\begin{proof}
(1) Immediate from \cite[Lemma 1.7]{M}.\ 
(2) \cite[1.9(1)]{DMS}.\ 
(3) \cite[1.10(1)]{DMS}.
\end{proof}

\begin{lemma}\label{lem:interior0}
Let $\mathcal R=(R,<,+,0,\ldots)$ be a definably complete expansion of a densely linearly ordered abelian group.
A CBD set $X \subset R^{n+1}$ has a nonempty interior if the CBD set 
\begin{center}
$\{x \in R^n\;|\; X_x \text{ contains a closed interval of length }s\}$
\end{center}
has a nonempty interior for some $s>0$.
\end{lemma}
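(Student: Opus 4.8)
The plan is to reduce the statement to a one-dimensional oscillation estimate for a single definable function and then to extract a suitable subbox by the definable Baire category theorem. First I would fix a nonempty open box $B \subseteq R^n$ contained in the set with nonempty interior, so that for every $x \in B$ the fiber $X_x$ contains a closed interval of length $s$. Since $X$ is closed and bounded, for each such $x$ the set $\{v \in R \;|\; [v,v+s]\subseteq X_x\}$ is a nonempty CBD subset of $R$, so by definable completeness it has a least element $a(x)$. This defines a bounded definable function $a\colon B\to R$ with $[a(x),a(x)+s]\subseteq X_x$. The key observation is that it suffices to find an open subbox $B'\subseteq B$ on which $a$ has oscillation less than $s$, that is, $\sup_{B'}a-\inf_{B'}a<s$: setting $c=\sup_{B'}a$ and $d=\inf_{B'}a+s$ we then have $c<d$ and $(c,d)\subseteq[a(x),a(x)+s]\subseteq X_x$ for every $x\in B'$, so the open box $B'\times(c,d)$ is contained in $X$.

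Next I would analyze the sublevel sets $E_t=\{x\in B \;|\; a(x)\le t\}$. Each $E_t$ is the intersection with $B$ of the projection of the CBD set $\{(x,v)\;|\; v\le t,\ [v,v+s]\subseteq X_x\}$, hence is relatively closed in $B$, because projections of closed bounded definable sets are closed in a definably complete structure. The family $\{E_t\}$ is definable and increasing with $\bigcup_t E_t=B$, so the definable Baire category theorem forbids all the $E_t$ from being nowhere dense; thus $\myint(E_t)\neq\emptyset$ for some $t$. Let $c^*$ be the infimum of those $t$. It is finite because $X$ is bounded, and for $t>c^*$ the set $E_t$ has nonempty interior while for $t<c^*$ it is nowhere dense.

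The crux, and the step I expect to be the main obstacle, is to squeeze the range of $a$ on a subbox into an interval of length strictly less than the fixed threshold $s$; the difficulty is that $a$ is only lower semicontinuous and may have arbitrarily large upward jumps, so one cannot simply invoke continuity, and one cannot bisect the range because the group need not be divisible. I would resolve this using $c^*$ together with the dense ordering. Choose $0<\varepsilon<\delta<s$, which is possible since the order is dense. As $c^*+\varepsilon>c^*$, the set $E_{c^*+\varepsilon}$ has nonempty interior and contains an open subbox $B_1$, on which $a\le c^*+\varepsilon$. As $c^*+\varepsilon-\delta<c^*$, the closed set $E_{c^*+\varepsilon-\delta}$ has empty interior, so $B_1\setminus E_{c^*+\varepsilon-\delta}$ contains an open subbox $B_2$, on which $a>c^*+\varepsilon-\delta$. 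Hence $a(B_2)\subseteq(c^*+\varepsilon-\delta,\ c^*+\varepsilon]$, giving oscillation at most $\delta<s$; applying the reduction of the first paragraph with $B'=B_2$ then produces an open box inside $X$.

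The only nontrivial external inputs are the definable Baire category theorem and the closedness of projections of CBD sets, both of which are available for definably complete expansions of densely linearly ordered abelian groups. I expect the verifications that $a$ is well defined, that the $E_t$ are closed, and that the final box lies in $X$ to be routine, with all of the genuine content concentrated in the $c^*$ argument of the last paragraph.
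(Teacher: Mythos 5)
Your argument is essentially correct and, importantly, it is not a variant of anything in the paper: the paper disposes of this lemma with a bare citation to \cite{DMS}, so what you have produced is a self-contained substitute for that citation. The skeleton is right: the definable selection $a(x)=\min\{v \mid [v,v+s]\subseteq X_x\}$ exists because $\{(x,v)\mid [v,v+s]\subseteq X_x\}$ is a CBD subset of $X$ (closedness of this set uses the translation $w\mapsto w+(v'-v)$, i.e.\ the group structure) and nonempty closed bounded definable subsets of $R$ attain their infima; each $E_t$ is relatively closed in $B$ because projections of CBD sets are closed in a definably complete structure; and the final verification that $B_2\times(c,d)\subseteq X$ when the oscillation of $a$ on $B_2$ is less than $s$ is correct. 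Your $c^*$ device is the genuinely nontrivial step, and it is a clean way to force the oscillation below the threshold $s$ using only density of the order, without ever dividing $s$ --- which is exactly the obstacle one must dodge in a possibly non-divisible ordered group.

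The one real flaw is the appeal to ``the definable Baire category theorem,'' which you assert in your last paragraph is available for definably complete expansions of densely linearly ordered abelian groups. It is not, or at least not in the literature: the known proof that definably complete structures are definably Baire requires an ordered \emph{field}, and a Baire-type statement for one-variable unions of nowhere dense CBD sets is essentially property $(a)$ of Definition \ref{def:technical}, which this paper goes to some trouble to derive from hypothesis (1) via Lemma \ref{lem:basic1} precisely because it is not free in this setting. Fortunately your use of it is vacuous: since $X$ is bounded, $a$ is bounded above on $B$, so $E_t=B$ --- which certainly has nonempty interior --- for every $t\geq\sup_B a$, and the infimum $c^*$ of the definable set $\{t\mid \myint(E_t)\neq\emptyset\}$ exists in $R$ by definable completeness (it is bounded below because $E_t=\emptyset$ for $t<\inf_B a$). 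Replace the Baire citation by this one-line observation and the proof is complete; as written, it leans on an external input that would, if true in this generality, undercut the point of the paper you are proving the lemma for.
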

\begin{proof}
\cite[2.8(2)]{DMS}
\end{proof}

We give a definition of dimension of a $\myds$-set.
\begin{definition}[Dimension]
Let $\mathcal R=(R,<,\ldots)$ be an expansion of a densely linearly ordered structure.
Consider a $\myds$-subset $X$ of $R^n$ and a point $x \in R^n$.
The \textit{dimension of $X$} is defined as follows:
\begin{itemize}
\item $\dim X=-\infty$ if $X$ is an empty set.
\item Otherwise, $\dim X$ is the supremum of nonnegative integers $d$ such that the image $\pi(X)$ has a nonempty interior for some coordinate projection $\pi:R^n \rightarrow R^d$.
\end{itemize} 
The \textit{full dimension} $\myfdim(X)$ of $X$ is the pair $\langle d ,k \rangle$, where $d=\dim X$ and $k$ is the number of coordinate spaces of dimension $d$ such that the images of $X$ under the projections of $R^n$ onto the coordinate spaces have nonempty interiors.
\end{definition}

\begin{definition}
For a set $X$, a family $\mathcal F$ of subsets of $X$ is called a \textit{filtered collection} if, for any $B_1, B_2 \in \mathcal F$, there exists $B_3 \in \mathcal F$ with $B_3 \subseteq B_1 \cap B_2$. 
A parameterized family $\{S \langle t \rangle\}_{t \in T}$ of definable subsets of $X$ is a \textit{definable filtered collection} if it is a filtered collection.
\end{definition}

\begin{proposition}\label{prop:def_compact}
Consider a definably complete expansion of a dense linear order without endpoints.
Every definable filtered collection of nonempty definable closed subsets of a closed, bounded and definable set has a nonempty intersection.
\end{proposition}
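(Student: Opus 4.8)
The plan is to prove the statement by induction on the ambient dimension $n$, using only definable completeness. Write $\{S\langle t\rangle\}_{t\in T}$ for the given definable filtered collection of nonempty definable closed subsets of a CBD set $C\subseteq R^n$; since each $S\langle t\rangle$ is a closed subset of the CBD set $C$, it is itself CBD. The heart of the matter is the case $n=1$ together with an auxiliary fact that coordinate projections of CBD sets are again closed; granting these, the induction step is routine.

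For $n=1$, I would set $m(t)=\min S\langle t\rangle$, which exists and lies in $S\langle t\rangle$ because a nonempty closed bounded definable subset of $R$ attains its infimum (the infimum exists by definable completeness, and it belongs to the set because the complement is open). Put $a=\sup\{m(t):t\in T\}$, which again exists in $R$ by definable completeness and boundedness. I would then check that $a\in S\langle t_0\rangle$ for every fixed $t_0$: the set $P=\{m(t'):S\langle t'\rangle\subseteq S\langle t_0\rangle\}$ is a definable subset of $S\langle t_0\rangle$, and the filtered hypothesis forces $\sup P=a$, since for any $t$ a common refinement $t'$ with $S\langle t'\rangle\subseteq S\langle t_0\rangle\cap S\langle t\rangle$ satisfies $m(t')\ge m(t)$. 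As $S\langle t_0\rangle$ is closed and $a=\sup P$ with $P\subseteq S\langle t_0\rangle$, the point $a$ lies in $\mycl(P)\subseteq S\langle t_0\rangle$; since $t_0$ was arbitrary, $a\in\bigcap_{t}S\langle t\rangle$.

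The auxiliary fact I would establish separately is that for CBD $A\subseteq R^{n-1}\times R$ and the projection $\pi$ onto $R^{n-1}$ the image $\pi(A)$ is closed. Fixing $x_0\in\mycl(\pi(A))$, I must produce a point of $A$ over $x_0$. Reusing the one-variable idea, let $f(x)=\min A_x$ on $\pi(A)$ (each fiber $A_x$ is closed, bounded, and nonempty), and for each open box $N$ around $x_0$ put $g(N)=\inf\{f(x):x\in\pi(A)\cap N\}$, which is a nonempty bounded set since $x_0\in\mycl(\pi(A))$. Because $g$ increases as $N$ shrinks, $y_0:=\sup_N g(N)$ exists by definable completeness, and a short argument with the definitions of $\sup$ and $\inf$ shows that every basic neighbourhood $U\times(p,q)$ of $(x_0,y_0)$ meets $A$; as $A$ is closed this gives $(x_0,y_0)\in A$, hence $x_0\in\pi(A)$.

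Finally, for the induction step, $\{\pi(S\langle t\rangle)\}_{t}$ is a definable filtered collection of nonempty subsets of $R^{n-1}$, closed by the auxiliary fact and contained in the CBD set $\pi(C)$. The induction hypothesis yields $x_0\in\bigcap_t\pi(S\langle t\rangle)$, and then $\{S\langle t\rangle_{x_0}\}_t$ is a definable filtered collection of nonempty closed bounded subsets of $R$, so the case $n=1$ supplies $y_0\in\bigcap_t S\langle t\rangle_{x_0}$; the point $(x_0,y_0)$ lies in the whole intersection. I expect the main obstacle to be the auxiliary closedness of projections: it is the only place where definable completeness must be exploited in the absence of a metric, and care is needed to keep all the relevant objects (the box neighbourhoods and the functions $f$ and $g$) definable and to argue purely in the order topology rather than with a norm.
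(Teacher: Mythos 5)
Your proof is correct, and since the paper's own ``proof'' is only a one-line citation to \cite[Section 8.4]{J} (asserting that the o-minimal argument carries over verbatim), your induction on the ambient dimension --- realizing the base case via minima of the sets and the supremum of those minima, and reducing dimension via the auxiliary fact that projections of CBD sets are closed --- is exactly the standard argument being invoked, carried out using only definable completeness and the order topology. No gaps: the definability of $m$, of the set $P$, and of the family of values $g(N)$ all follow from the sets in question being fibers of definable sets, which is what makes the appeals to definable completeness legitimate.
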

\begin{proof}
The literally same proof as that for o-minimal structures in \cite[Section 8.4]{J} works. 
\end{proof}

The following lemma is found in \cite[Lemma 2.3]{Fuji5}.

\begin{lemma}\label{lem:local}
Consider a definably complete structure $\mathcal R=(R,<,\ldots)$.
The following are equivalent:
\begin{enumerate}
\item[(1)] The structure $\mathcal R$ is a locally o-minimal structure.
\item[(2)] Any definable set in $R$ has a nonempty interior or it is closed and discrete. 
\end{enumerate}
\end{lemma}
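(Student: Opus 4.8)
The plan is to prove the two implications separately, with essentially all of the content in $(2)\Rightarrow(1)$. For $(1)\Rightarrow(2)$ I would fix a definable $X\subseteq R$, assume $\myint(X)=\emptyset$, and aim to show $X$ is closed and discrete. Local o-minimality supplies, for each $a\in R$, an open interval $I_a\ni a$ with $X\cap I_a$ a finite union of points and open intervals; since $X$ has empty interior, $X\cap I_a$ can contain no open interval and is therefore a finite set. Discreteness is then immediate, by shrinking $I_a$ to separate $a$ from the other finitely many points of $X\cap I_a$. For closedness, given $a\in\mycl(X)$ with $I_a=(c,d)$, the part $X\setminus I_a$ is bounded away from $a$ by $c$ and $d$, so $a\in\mycl(X\cap I_a)=X\cap I_a\subseteq X$ because the finite set $X\cap I_a$ is closed.

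For $(2)\Rightarrow(1)$, fix a definable $X\subseteq R$ and a point $a$, and consider the frontier $Y=\mycl(X)\cap\mycl(R\setminus X)$, which is a closed definable set. The first and crucial step is to show $\myint(Y)=\emptyset$. If some open interval $J\subseteq Y$, then $X\cap J$ is dense in $J$ (from $J\subseteq\mycl(X)$) and $(R\setminus X)\cap J$ is dense in $J$ (from $J\subseteq\mycl(R\setminus X)$). Applying hypothesis (2) to the definable set $X\cap J$ then yields a contradiction: a set dense in $J$ has nonempty intersection with $J$ but no isolated point there, so it cannot be discrete; and it cannot have nonempty interior either, since any open subinterval it contained would meet the dense set $(R\setminus X)\cap J$. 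This is the step where the hypothesis is genuinely used, and I expect it to be the main obstacle, because the entire force of (2) is precisely that definability rules out the ``dense and codense'' pathology that an arbitrary subset of $R$ could display.

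With $\myint(Y)=\emptyset$ in hand, hypothesis (2) applied to the closed set $Y$ forces $Y$ to be discrete. I would then observe that discreteness already controls the behavior at the single point $a$: if $a\in Y$ then $a$ is isolated in $Y$, and if $a\notin Y$ then $a$ lies in the open complement of the closed set $Y$, so in either case there is an open interval $I\ni a$ with $Y\cap I\subseteq\{a\}$. This local remark is what lets me avoid having to prove separately that bounded closed discrete definable sets are finite; I only need finiteness of the frontier at the one point $a$, and that comes for free.

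Finally, on each of the (at most two) components of $I\setminus\{a\}$ the set $X$ meets no frontier point, so such a component $C$ is partitioned into the two relatively open definable sets $\myint(X)\cap C$ and $\myint(R\setminus X)\cap C$. Here I would invoke definable connectedness of open intervals, a standard consequence of definable completeness proved by a one-line supremum argument, to conclude that one of these two sets is empty; that is, $X$ is either all-in or all-out on $C$. Combining the behavior on the open pieces with the status of the single point $a$ shows that $X\cap I$ is a finite union of points and open intervals, which is exactly local o-minimality at $a$.
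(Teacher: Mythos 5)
Your proof is correct. Note, however, that the paper itself gives no argument for this lemma: it simply cites \cite[Lemma 2.3]{Fuji5}, so there is no internal proof to compare against; your write-up supplies a self-contained proof of exactly the standard shape one would expect behind that citation. The direction $(1)\Rightarrow(2)$ is routine and you handle it cleanly (the only conventions you implicitly use are that the interval pieces in the local decomposition are nonempty, so empty interior forces $X\cap I_a$ finite, and that finite sets are closed in the order topology). In $(2)\Rightarrow(1)$, the frontier set $Y=\mycl(X)\cap\mycl(R\setminus X)$ is indeed the right object, and your three uses of the ambient hypotheses all check out: density of the order (which is built into the paper's definition of definably complete, since $\mathcal R$ expands a densely ordered set without endpoints) is what makes a set dense in $J$ have no isolated points, ruling out the ``dense and codense'' configuration; definable completeness is what gives definable connectedness of intervals via the supremum argument, which legitimately splits each component of $I\setminus\{a\}$ into all-in or all-out; and your observation that discreteness of $Y$ need only be exploited locally at the single point $a$ neatly sidesteps any need for a finiteness lemma about closed discrete definable sets, which would in fact be false in general definably complete structures. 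The final assembly of $X\cap I$ from at most two intervals and the point $a$ is exactly what local o-minimality at $a$ requires.
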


\section{Key lemmas}\label{sec:key}
We demonstrate key lemmas and their corollaries in this section.

\subsection{First key lemma}\label{sec:keysub}
We first introduce the following technical definition.
\begin{definition}\label{def:technical}
Let $\mathcal R=(R,<,+,0,\ldots)$ be a definably complete expansion of a densely linearly ordered abelian group.

The structure $\mathcal R$ enjoys the \textit{property $(a)$} if any $\myds$-subset $X$ of $R$ has an empty interior if $X\langle r,s \rangle$ have empty interiors for all $r>0$ and $s>0$.
Here, $\{X\langle r,s \rangle\}_{r>0,s>0}$ is an arbitrary $\myds$-family with $X=\bigcup_{r>0,s>0}X\langle r,s \rangle$.

The structure $\mathcal R$ enjoys the \textit{property $(b)$} if at least one of $\myds$-subsets $X_1$ and $X_2$ of $R$ has a nonempty interior when the union $X_1 \cup X_2$ has a nonempty interior.
\end{definition}

We give the first key lemma below.
We prove it using the techniques employed in \cite{DMS,Fuji4}.
\begin{lemma}\label{lem:key}
Let $\mathcal R=(R,<,+,0,\ldots)$ be a definably complete expansion of a densely linearly ordered abelian group enjoying the property $(a)$.
Let $X$ be a $\myds$-subset of $R^n$ and $\{X\langle r,s \rangle\}_{r>0,s>0}$ be a $\myds$-family with $X=\bigcup_{r>0,s>0}X\langle r,s \rangle$.
\begin{enumerate}
\item[$(A)_n$] The $\myds$-set $X$ has an empty interior if the CBD sets $X\langle r,s \rangle$ have empty interiors for all $r>0$ and $s>0$.
\item[$(B)_n$] Set 
\begin{equation*}
\mathcal I(X)=\{x \in R^{n-1}\;|\; \text{the fiber } X_x \text{ contains a nonempty open interval}\}\text{.}
\end{equation*}
It is a $\myds$-set.
If $\mathcal I(X)$ has a nonempty interior, the CBD set $X\langle r,s \rangle$ has a nonempty interior for some $r>0$ and $s>0$.
\end{enumerate}
\end{lemma}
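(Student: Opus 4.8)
The plan is to prove $(A)_n$ and $(B)_n$ simultaneously by induction on $n$, arranging matters so that all the arithmetic takes place in the one-dimensional fibers, where property~$(a)$ is available. For $n=1$ there is nothing new: $(A)_1$ is a verbatim restatement of property~$(a)$, and since $\mathcal I(X)$ is then a (trivially $\myds$) subset of the one-point space $R^0$, the hypothesis of $(B)_1$ merely says that $X$ itself has nonempty interior in $R$, whence the contrapositive of property~$(a)$ produces an $X\langle r,s\rangle$ with nonempty interior. So I would assume $n\ge 2$ and that $(A)_{n-1}$ holds.

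The first task of the inductive step is to exhibit $\mathcal I(X)$ as a $\myds$-set via the explicit family
\[
\mathcal I(X)\langle r,s\rangle=\{x\in R^{n-1}\mid \|x\|_\infty\le r\text{ and }X\langle r,s\rangle_x\text{ contains a closed interval of length}\ge s\}.
\]
For each fixed $x$ the family $\{X\langle r,s\rangle_x\}_{r,s}$ is a $\myds$-family (fibers of CBD sets are CBD and the defining inclusions are inherited from $\{X\langle r,s\rangle\}$) with union $X_x$, so property~$(a)$ applies fiberwise and shows that $X_x$ has nonempty interior, equivalently contains an open interval, exactly when some $X\langle r,s\rangle_x$ contains a closed interval of positive length. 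Chasing the two parameters — enlarge $r$ both to bound $x$ and to grow the slice, and shrink $s$ below the length of the interval — then yields $\mathcal I(X)=\bigcup_{r,s}\mathcal I(X)\langle r,s\rangle$. Each slice is CBD because the set of pairs $(x,a)$ with $[a,a+s]\subseteq X\langle r,s\rangle_x$ is closed and bounded and its coordinate projection is again closed and bounded by definable compactness (Proposition~\ref{prop:def_compact}); the monotonicity demanded of a $\myds$-family is inherited from $\{X\langle r,s\rangle\}$. Hence $\mathcal I(X)$ is $\myds$.

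With this in hand, $(B)_n$ is immediate. If $\mathcal I(X)$ has nonempty interior, then, it being $\myds$, the contrapositive of the induction hypothesis $(A)_{n-1}$ forces some slice $\mathcal I(X)\langle r,s\rangle$ to have nonempty interior; \emph{a fortiori} the set $\{x\mid X\langle r,s\rangle_x\text{ contains a closed interval of length }s\}$ has nonempty interior, and Lemma~\ref{lem:interior0} promotes this to the nonemptiness of the interior of the CBD set $X\langle r,s\rangle\subset R^n$, which is exactly $(B)_n$. Finally $(A)_n$ is the contrapositive of $(B)_n$: were $X$ to contain an open box $\prod_{i=1}^n I_i$, then every $x\in\prod_{i=1}^{n-1} I_i$ would give a fiber $X_x\supseteq I_n$ containing an open interval, so $\mathcal I(X)$ would have nonempty interior and $(B)_n$ would already supply an $X\langle r,s\rangle$ of nonempty interior.

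I expect the only genuine obstacle to be the first task, namely proving that $\mathcal I(X)$ is $\myds$ with the family above. Two points need care: verifying that each slice $\mathcal I(X)\langle r,s\rangle$ is really closed and bounded — this is where definable compactness, through the closedness of coordinate projections of CBD sets, is essential — and confirming that the union of the slices is precisely $\mathcal I(X)$, which is exactly where property~$(a)$ enters through the one-dimensional fibers. The monotonicity bookkeeping for the two parameters is routine once the inclusions for $\{X\langle r,s\rangle\}$ are recalled, and the remaining implications are formal.
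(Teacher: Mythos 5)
Your proof is correct and follows essentially the same route as the paper's: a simultaneous induction in which property $(a)$ is applied fiberwise to exhibit $\mathcal I(X)$ as a $\myds$-set, the lower-dimensional assertion $(A)$ produces a slice of $\mathcal I(X)$ with nonempty interior, Lemma \ref{lem:interior0} lifts this to $X\langle r,s\rangle$, and $(A)_n$ follows from $(B)_n$ by projecting an open box. The only differences are cosmetic (your extra boundedness condition $\|x\|_\infty\le r$ and the use of intervals $[a,a+s]$ rather than $[t-s,t+s]$), and your indexing $(A)_{n-1}\Rightarrow(B)_n\Rightarrow(A)_n$ is in fact cleaner than the paper's.
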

\begin{proof}
We demonstrate $(A)_n$ and $(B)_n$ simultaneously by induction on $n$.
The assertion $(A)_1$ is identical to the property (a).
Therefore, the assertion $(A)_1$ holds true by the assumption.

We next demonstrate that the property $(a)$ and the assertion $(A)_n$ implies the assertion $(B)_n$ when $n \geq 1$.
Set $Y\langle r,s \rangle=\{x \in R^n\;|\; \exists t \in R,\ [t-s,t+s] \subset (X\langle r,s \rangle)_x\}$.
The set $Y\langle r,s \rangle$ is CBD.
We show that $\mathcal I(X)=\bigcup_{r,s}Y\langle r,s \rangle$.
It means that $\mathcal I(X)$ is a $\myds$-set.
The inclusion $\bigcup_{r,s}Y\langle r,s \rangle \subseteq \mathcal I(X)$ is obvious.
We demonstrate the opposite inclusion.
Take an arbitrary point $x \in \mathcal I(X)$.
We have $\myint(X_x) \not= \emptyset$.
We get $\myint((X\langle r,s \rangle)_x) \not=\emptyset$ for some $r>0$ and $s>0$ by the property $(a)$.
There exist $t \in R$ and $s'>0$ with $[t-s',t+s'] \subseteq (X\langle r,s \rangle)_x$.
Replacing $s$ with $\min\{s,s'\}$, we may assume that $[t-s,t+s] \subseteq (X\langle r,s \rangle)_x$.
It means that $x \in Y_{r,s}$.

By the assertion $(A)_n$, the CBD set $Y\langle r,s \rangle$ has a nonempty interior for some $r>0$ and $s>0$.
The CBD set $X\langle r,s \rangle$ has a nonempty interior by Lemma \ref{lem:interior0}.
We have demonstrated the assertion $(B)_n$.

We next prove that the assertion $(B)_n$ implies the assertion $(A)_{n+1}$ when $n \geq 1$.
When the $\myds$-subset $X$ of $R^{n+1}$ has a nonempty interior, the set $\mathcal I(X)$ has a nonempty interior because $X$ contains an open box.
For some $r>0$ and $s>0$, the CBD set $X\langle r,s \rangle$ has a nonempty interior by the assumption.
\end{proof}

We give four corollaries of this lemma.

\begin{corollary}\label{cor:key1}
Let $\mathcal R$ and $X$ be as in Lemma \ref{lem:key}.
If the $\myds$-set $\mathcal I(X)$ has a nonempty interior, then $X$ has a nonempty interior.
\end{corollary}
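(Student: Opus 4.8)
The plan is to read the conclusion off directly from part $(B)_n$ of Lemma~\ref{lem:key}, combined with the elementary monotonicity of the interior operator under set inclusion. Since $\mathcal R$ and $X$ are taken exactly as in the lemma, all the hypotheses needed to invoke $(B)_n$ are already in force, and no new construction is required.

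First I would apply the second assertion of $(B)_n$. By hypothesis $\mathcal I(X)$ has a nonempty interior, so that assertion furnishes a pair $r>0$, $s>0$ for which the CBD set $X\langle r,s \rangle$ has a nonempty interior. Next I would use the inclusion $X\langle r,s \rangle \subseteq X$, which holds because $X=\bigcup_{r>0,s>0}X\langle r,s \rangle$. Monotonicity of the interior then yields $\myint(X\langle r,s \rangle) \subseteq \myint(X)$, and since the left-hand side is nonempty, so is the right-hand side. Hence $X$ has a nonempty interior, which is the desired conclusion.

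I do not anticipate any genuine obstacle here: this corollary is essentially the repackaging of the quantitative statement $(B)_n$ into the qualitative form that is convenient to cite in later arguments. The single substantive move is the passage from a particular member $X\langle r,s \rangle$ of the $\myds$-family to the whole union $X$ via the inclusion above; everything else is a direct appeal to the lemma.
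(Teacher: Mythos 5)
Your proof is correct and is exactly what the paper intends: its own proof is just the phrase ``Obvious by Lemma \ref{lem:key}$(B)_n$,'' and your write-up supplies the two implicit steps (invoking $(B)_n$ to get a member $X\langle r,s\rangle$ with nonempty interior, then using $X\langle r,s\rangle \subseteq X$). No difference in approach.
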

\begin{proof}
Obvious by Lemma \ref{lem:key}$(B)_n$.
\end{proof}

\begin{corollary}\label{cor:key3}
Let $\mathcal R$ and $X$ be as in Lemma \ref{lem:key}.
Let $\pi:R^n \rightarrow R^d$ be a coordinate projection such that $\pi(X)$ has a nonempty interior.
Then, there exists a nonempty bounded open box $B$ in $R^n$ such that, for all $x \in \pi(B)$, the fibers $\pi^{-1}(x) \cap (B \cap X)$ of $B \cap X$ are not empty sets.
\end{corollary}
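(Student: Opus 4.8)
The plan is to reduce the statement to a single bounded member of the $\myds$-family and then enclose all of its fibers inside one large open box. After permuting coordinates I may assume that $\pi:R^n\to R^d$ is the projection onto the first $d$ coordinates, writing points of $R^n$ as $(x,z)$ with $x\in R^d$ and $z\in R^{n-d}$, so that $\pi(x,z)=x$ and $X_x=\{z\in R^{n-d}\mid (x,z)\in X\}$. The guiding observation is that the projection satisfies $\pi(X)=\bigcup_{r,s}\pi(X\langle r,s\rangle)$ and that each $\pi(X\langle r,s\rangle)$ is again CBD, so the hypothesis $\myint(\pi(X))\neq\emptyset$, combined with assertion $(A)_d$ of Lemma \ref{lem:key}, should concentrate the nonempty interior onto one concrete member of the family.

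First I would verify that $\{\pi(X\langle r,s\rangle)\}_{r>0,s>0}$ is a $\myds$-family representing $\pi(X)$: boundedness of each projection is clear, closedness of the projection of a CBD set follows from definable completeness, the defining inclusions of a $\myds$-family are preserved under $\pi$, and $\bigcup_{r,s}\pi(X\langle r,s\rangle)=\pi(X)$. Applying the contrapositive of $(A)_d$ to this family, the assumption that $\pi(X)$ has a nonempty interior then yields indices $r_0,s_0>0$ for which $K:=X\langle r_0,s_0\rangle$ satisfies $\myint(\pi(K))\neq\emptyset$. I would next fix a bounded open box $U\subseteq\pi(K)$ in $R^d$.

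Since $K$ is bounded, there is $M>0$ with $K\subseteq R^d\times(-M,M)^{n-d}$; I set $V=(-M,M)^{n-d}$ and $B=U\times V$, a nonempty bounded open box with $\pi(B)=U$. For every $x\in U$ the point $x$ lies in $\pi(K)$, so $K_x$ is nonempty, while $K_x\subseteq V$ by the choice of $V$; since $K\subseteq X$ this gives $\pi^{-1}(x)\cap(B\cap X)=\{x\}\times(V\cap X_x)\supseteq\{x\}\times K_x\neq\emptyset$, which is exactly the desired conclusion. I expect the only genuinely delicate point to be the first step, namely confirming that the projections $\pi(X\langle r,s\rangle)$ stay closed so that $(A)_d$ is legitimately applicable to $\pi(X)$; everything after the extraction of the bounded member $K$ is immediate, precisely because $K$, unlike $X$ itself, has all of its fibers trapped inside a fixed bounded region.
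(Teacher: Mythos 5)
Your proposal is correct and follows essentially the same route as the paper: extract a member $X\langle r_0,s_0\rangle$ whose projection has nonempty interior via Lemma \ref{lem:key}$(A)_d$, take a bounded box $U$ inside that projection, and use boundedness of the CBD member to trap its fibers in a fixed box $V$, setting $B=U\times V$. The point you flag as delicate (closedness of projections of CBD sets, needed so that $\{\pi(X\langle r,s\rangle)\}$ is a legitimate $\myds$-family) is indeed the standard fact underlying the paper's appeal to $(A)_d$ and holds by definable completeness.
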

\begin{proof}
We may assume that the projection $\pi$ is the projection onto the first $d$ coordinates without loss of generality.
Let $\{X\langle r,s \rangle\}_{r>0,s>0}$ be a $\myds$-family with $X=\bigcup_{r,s}X\langle r,s \rangle$.
Since $\pi(X)$ has a nonempty interior, the set $\pi(X\langle r,s \rangle)$ has a nonempty interior for some $r>0$ and $s>0$ by Lemma \ref{lem:key}$(A)_d$.
Take a nonempty bounded open box $U$ in $R^d$ contained in $\pi(X\langle r,s \rangle)$.
Since $X\langle r,s \rangle$ is bounded, there exists a nonempty bounded open box $V$ in $R^{n-d}$ such that $X\langle r,s \rangle \cap (U \times R^{n-d}) \subseteq U \times V$.
Set $B=U \times V$.
We have $\pi^{-1}(x) \cap (B \cap X\langle r,s \rangle) \neq \emptyset$ for all $x \in U$.
In particular, we obtain $\pi^{-1}(x) \cap (B \cap X) \neq \emptyset$.
\end{proof}

\begin{corollary}\label{cor:key2}
Let $\mathcal R=(R,<,+,0,\ldots)$ be a definably complete expansion of a densely linearly ordered abelian group enjoying the properties $(a)$ and $(b)$.
Let $X_1$ and $X_2$ be $\myds$-subsets of $R^n$.
If $X_1 \cup X_2$ has a nonempty interior, then at least one of $X_1$ and $X_2$ has a nonempty interior.
\end{corollary}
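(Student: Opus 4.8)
The plan is to argue by induction on $n$, reducing the $(n+1)$-dimensional assertion to the $n$-dimensional one via the operator $\mathcal I$ introduced in Lemma \ref{lem:key}. The base case $n=1$ is nothing but the property $(b)$: for subsets of $R$, ``having a nonempty interior'' and ``containing a nonempty open interval'' coincide, so the statement is literally the definition of property $(b)$.

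For the inductive step I assume the corollary for $\myds$-subsets of $R^n$ and take $\myds$-subsets $X_1,X_2$ of $R^{n+1}$ with $X_1 \cup X_2$ having a nonempty interior. First I would record the bookkeeping: $X_1 \cup X_2$ is again a $\myds$-set by Lemma \ref{lem:quo}(2), so the sets $\mathcal I(X_1)$, $\mathcal I(X_2)$ and $\mathcal I(X_1 \cup X_2)$ are all defined and are $\myds$-subsets of $R^n$ by Lemma \ref{lem:key}$(B)_{n+1}$. The crux is then the identity $\mathcal I(X_1 \cup X_2) = \mathcal I(X_1) \cup \mathcal I(X_2)$. The inclusion $\supseteq$ is immediate. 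For $\subseteq$, fix $x \in \mathcal I(X_1 \cup X_2)$; the fiber $(X_1 \cup X_2)_x = (X_1)_x \cup (X_2)_x$ contains a nonempty open interval, hence has a nonempty interior in $R$. The fibers $(X_i)_x$ are $\myds$-subsets of $R$ (again by Lemma \ref{lem:quo}(2)), so property $(b)$ forces one of them to have a nonempty interior, that is, to contain a nonempty open interval, whence $x \in \mathcal I(X_1) \cup \mathcal I(X_2)$.

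With the identity in hand I would finish as follows. Since $X_1 \cup X_2$ has a nonempty interior it contains an open box $B_1 \times \cdots \times B_{n+1}$, and for every $x \in B_1 \times \cdots \times B_n$ the fiber $(X_1 \cup X_2)_x$ contains the open interval $B_{n+1}$; hence $B_1 \times \cdots \times B_n \subseteq \mathcal I(X_1 \cup X_2)$ and $\mathcal I(X_1 \cup X_2)$ has a nonempty interior. By the identity above, the union $\mathcal I(X_1) \cup \mathcal I(X_2)$ of $\myds$-subsets of $R^n$ has a nonempty interior, so the inductive hypothesis supplies an index $i$ with $\mathcal I(X_i)$ having a nonempty interior. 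Corollary \ref{cor:key1} then gives that this $X_i$ has a nonempty interior, completing the induction.

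I expect the only delicate point to be the fiberwise application of property $(b)$ in proving $\mathcal I(X_1 \cup X_2) = \mathcal I(X_1) \cup \mathcal I(X_2)$; the remainder is routine manipulation with the $\myds$ closure properties of Lemma \ref{lem:quo} and the two already-established corollaries. The one thing to watch is to invoke Lemma \ref{lem:quo}(2) in both roles---to keep the union a $\myds$-set and to keep the one-dimensional fibers $\myds$-sets---so that both property $(b)$ and the inductive hypothesis are genuinely licensed.
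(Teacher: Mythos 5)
Your proposal is correct and follows essentially the same route as the paper: induction on $n$ with base case given by property $(b)$, the identity $\mathcal I(X_1 \cup X_2)=\mathcal I(X_1)\cup\mathcal I(X_2)$ established via a fiberwise application of property $(b)$ (using Lemma \ref{lem:quo}(2) to keep the fibers $\myds$), and the conclusion via the induction hypothesis together with Corollary \ref{cor:key1}. The only difference is that you spell out the justification of the identity and the nonemptiness of $\myint(\mathcal I(X_1\cup X_2))$ in more detail than the paper does.
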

\begin{proof}
We show the corollary by induction on $n$.
The corollary follows from the property $(b)$ when $n=1$.
Consider the case in which $n>1$.
Set $X=X_1 \cup X_2$.
We have $\mathcal I(X)=\mathcal I(X_1) \cup \mathcal I(X_2)$ by the property $(b)$. 
Since $X$ has a nonempty interior, $\mathcal I(X)$ also has a nonempty interior by the definition of $\mathcal I(X)$.
The sets $\mathcal I(X_1)$ and $\mathcal I(X_2)$ are $\myds$ by Lemma \ref{lem:key}$(B)_n$.
By the induction hypothesis, at least one of $\mathcal I(X_1)$ and $\mathcal I(X_2)$ has a nonempty interior.
At least one of $X_1$ and $X_2$ has a nonempty interior by Corollary \ref{cor:key1}.
\end{proof}

\begin{corollary}\label{cor:key4}
Let $\mathcal R=(R,<,+,0,\ldots)$ be as in Corollary \ref{cor:key2}.
A definable constructible set $X$ has a nonempty interior when the closure of $X$ has a nonempty interior.
\end{corollary}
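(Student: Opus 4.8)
The plan is to exploit the normal form of a constructible set as a finite union of \emph{locally closed} sets (intersections of an open set with a closed set) together with Corollary \ref{cor:key2}. First I would write $X=\bigcup_{i=1}^{m}(U_i\cap C_i)$, where each $U_i$ is open and each $C_i$ is closed and definable. This decomposition is legitimate because the family of finite unions of locally closed sets forms a Boolean algebra containing every open set: it is visibly stable under finite unions and finite intersections, and the complement of $U\cap C$ equals $(R^n\setminus U)\cup(R^n\setminus C)$, which is again a union of two locally closed sets. Since the constructible sets are by definition the Boolean algebra generated by the open sets, every constructible $X$ admits such a presentation.

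Next, using that closure commutes with finite unions, I would write $\mycl(X)=\bigcup_{i=1}^{m}\mycl(U_i\cap C_i)$. Each set $\mycl(U_i\cap C_i)$ is closed and definable; a closed definable set is the complement of an open definable set, hence constructible, and therefore $\myds$ by Lemma \ref{lem:quo}(3). Because $\mycl(X)$ has a nonempty interior and finite unions of $\myds$-sets are again $\myds$ by Lemma \ref{lem:quo}(2), an iterated application of Corollary \ref{cor:key2} (available since $\mathcal R$ enjoys the properties $(a)$ and $(b)$) shows that $\mycl(U_{i_0}\cap C_{i_0})$ has a nonempty interior for some index $i_0$.

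It then remains to treat a single locally closed set $Y=U_{i_0}\cap C_{i_0}$. Since $Y\subseteq C_{i_0}$ and $C_{i_0}$ is closed, we have $\mycl(Y)\subseteq C_{i_0}$, so the nonempty open set $W=\myint(\mycl(Y))$ is contained in $C_{i_0}$. For any point $x\in W$, the open neighborhood $W$ of $x$ meets $Y$ because $x\in\mycl(Y)$; in particular $W\cap U_{i_0}\neq\emptyset$. As $W\cap U_{i_0}$ is open and $W\cap U_{i_0}\subseteq C_{i_0}\cap U_{i_0}=Y\subseteq X$, the set $X$ has a nonempty interior, which completes the argument.

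The main obstacle I anticipate is purely organizational: carefully justifying the locally closed normal form and making sure Corollary \ref{cor:key2} is applied to genuinely $\myds$ pieces (which is why the reduction to closed sets, and hence to constructible and $\myds$ sets, is essential). Once we are reduced to a single set of the form $U\cap C$, the containment $\mycl(U\cap C)\subseteq C$ renders the final step elementary, so I expect no real difficulty beyond bookkeeping there.
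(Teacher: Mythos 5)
Your proof is correct and follows essentially the same route as the paper: decompose $X$ into finitely many definable locally closed pieces, observe that the closures of the pieces are definable closed (hence $\myds$) sets whose union is $\mycl(X)$, apply Corollary \ref{cor:key2} to find a piece whose closure has interior, and finish with the elementary topological fact about locally closed sets (which the paper dismisses as ``a standard topological argument'' and you spell out via $W=\myint(\mycl(Y))\subseteq C_{i_0}$). The one point where the paper is more careful is the definability of the individual pieces $U_i\cap C_i$: since ``constructible'' is defined via Boolean combinations of arbitrary open sets, the paper invokes the Dougherty--Miller result \cite{DM} to replace these by \emph{definable} open sets, whereas your Boolean-algebra argument tacitly assumes the generators are already definable.
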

\begin{proof}
Note that every constructible definable set is a finite boolean combination of open definable sets by \cite{DM}.
Therefore, the definable constructible set $X$ is the union of a finite locally closed definable sets $X_1, \ldots X_n$.
Assume that the closure $\mycl(X)$ contains a nonempty interior.
Since $\mycl(X)=\bigcup_{i=1}^n \mycl(X_i)$ and definable closed sets are $\myds$, the closure $\mycl(X_i)$ has a nonempty interior for some $1 \leq i \leq n$ by Corollary \ref{cor:key2}.
A standard topological argument shows that $X_i$ has a nonempty interior because $X_i$ is locally closed.
It implies that $X$ has a nonempty interior.
\end{proof}

\subsection{Second key lemma}
We next prove the second key lemma.
A parameterized family of definable sets $\{X\langle r \rangle\}_{r>0}$ is called \textit{increasing} if $X\langle r \rangle \subseteq X\langle r' \rangle$ whenever $r \leq r'$.
It is called \textit{decreasing} if $X\langle r' \rangle \subseteq X\langle r \rangle$ whenever $r \leq r'$.
\begin{lemma}\label{lem:basic1}
Let $\mathcal R=(R,<,+,0,\ldots)$ be a definably complete expansion of a densely linearly ordered abelian group such that an arbitrary definable closed subset of $R$ is either  discrete or contains a nonempty open interval.
Let $\{X\langle r \rangle\}_{r>0}$ be a parameterized family of definable, discrete and  closed subsets of $R$.
Assume that it is either increasing or decreasing.
Then the union $X=\bigcup_{r>0} X\langle r \rangle$ is discrete and closed.  
\end{lemma}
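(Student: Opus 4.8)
The plan is to run everything through the closure $\mycl(X)$ and the dichotomy hypothesis, reducing the whole statement to a single delicate case. First note that $X=\bigcup_{r>0}X\langle r\rangle$ is definable, being the projection onto the last coordinate of the definable set $\{(r,x)\in R^2 : r>0,\ x\in X\langle r\rangle\}$; hence $\mycl(X)$ is a definable closed subset of $R$, and the hypothesis applies to it. I will also use the elementary remark that in both the increasing and the decreasing case the members grow as the parameter tends to one endpoint $e\in\{0,+\infty\}$ of $(0,\infty)$, the union being unchanged; so it suffices to treat one monotone direction, running all limits as $r\to e$.

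The easy horn of the dichotomy disposes of almost everything. If $\mycl(X)$ is discrete, then every point of $\mycl(X)$ is isolated in it, so its dense subset $X$ must exhaust it; thus $X=\mycl(X)$ is closed, and as a subset of a discrete set it is discrete. This already yields the conclusion, so the entire content of the lemma is the exclusion of the other horn, namely that $\mycl(X)$ contains a nonempty open interval $(a,b)$.

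The heart of the proof is therefore to derive a contradiction from $\mycl(X)\supseteq(a,b)$ together with the discreteness (hence empty interior and closedness) of each $X\langle r\rangle$. Fix a closed subinterval $J=[c,d]\subseteq(a,b)$ and set $m(r)=\sup_{t\in J}\mathrm{dist}(t,X\langle r\rangle)$; by monotonicity this varies monotonically in $r$, and a first use of Proposition \ref{prop:def_compact} shows $\inf_{r}m(r)=0$. Indeed, if the infimum were some $\mu>0$, then the filtered definable family of nonempty CBD sets $\{t\in J : \mathrm{dist}(t,X\langle r\rangle)\ge\mu\}$ would have a common point by Proposition \ref{prop:def_compact}, i.e.\ a point of $J$ with a fixed punctured neighbourhood missing every $X\langle r\rangle$, contradicting $J\subseteq\mycl(X)$. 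Thus the discrete members become arbitrarily dense in $J$.

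The main obstacle is to convert this density into an honest contradiction, and this is precisely where naive monotone estimates (longest gap, smallest gap, covering radius) all stall: the offending configuration is scale-invariant, since a dense, meager ``denominator-type'' set has a closure containing an interval, a situation consistent both with ordinary Baire category and with definable completeness alone. The hypothesis must therefore be fed back in. I expect the correct route to manufacture, from the parametrised discrete family under the assumption $\mycl(X)\supseteq(a,b)$, a \emph{definable closed subset of $R$ that is simultaneously non-discrete and has empty interior}, which directly contradicts the dichotomy. The natural way to build it is a definable-completeness selection over the continuum of scales: using the nested gap structure of the sets $X\langle r\rangle$ and taking limits through Proposition \ref{prop:def_compact}, one pins down a perfect, nowhere dense closed ``limit set'' of accumulation points. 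Designing this set and verifying its two defining properties is the step I expect to be the genuine crux, and I would model the construction on the arguments of \cite{Fuji3, Fuji4}. Once the interval horn is excluded, $\mycl(X)$ is forced to be discrete and the second paragraph completes the proof.
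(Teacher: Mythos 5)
Your setup is sound (the reduction to excluding the case $\mycl(X)\supseteq(a,b)$ is exactly how the paper begins, and your density argument via Proposition \ref{prop:def_compact} is correct as far as it goes), but the proof has a genuine gap precisely at the point you yourself flag as the crux: you never actually derive the contradiction from $\mycl(X)\supseteq(a,b)$. The proposal ends with a plan (``manufacture a definable closed subset of $R$ that is non-discrete with empty interior'' by some ``definable-completeness selection over the continuum of scales'') whose object is neither constructed nor verified. Since, as you correctly observe, the density of the $X\langle r\rangle$ in $J$ is by itself consistent with the hypotheses, everything rests on this missing construction, so the argument is incomplete.

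For comparison, the paper's device is more elementary than a perfect-set construction and uses the dichotomy a second time rather than exhibiting a counterexample to it. Fix $a\in X\cap I$ and set $f(r)=\inf\{x>a : x\in X\langle r\rangle\}$; monotonicity of the family makes $f$ monotone. One first shows $\myim(f)$ is discrete and closed: otherwise $\mycl(\myim(f))$ contains an interval $J$ by the dichotomy; picking $b=f(r)\in\myim(f)\cap J$, every $b'\in\myim(f)$ with $b'>b$ must lie in the \emph{single} discrete closed set $X\langle r\rangle$ (if $b'=f(r')$ with $r'>r$ then $b\in X\langle r\rangle\subseteq X\langle r'\rangle$ forces $f(r')\le b$, so necessarily $r'<r$ and $b'\in X\langle r'\rangle\subseteq X\langle r\rangle$), whence $\inf\{b'\in\myim(f):b'>b\}$ is a point $b_1>b$ of $X\langle r\rangle$ and $(b,b_1)$ misses $\myim(f)$, contradicting $b\in J\subseteq\mycl(\myim(f))$. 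Then $c=\inf\myim(f)$ satisfies $c\in\myim(f)$ and $c>a$, while $a\in I\subseteq\mycl(X)$ supplies some $b\in X$ with $a<b<c$ and hence some $r$ with $f(r)\le b<c$ --- the desired contradiction. This ``first point to the right of $a$'' trick is the idea your proposal is missing; without it (or a completed version of your limit-set construction) the lemma is not proved.
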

\begin{proof}
We prove this lemma similarly to \cite[Theorem 4.3]{Fuji3}.
We only consider the case in which the given parameterized family is increasing.
We can prove the lemma similarly in the other case.

Note that a definable subset of a definable, closed and discrete subset is again discrete and closed.
In particular, under our assumption, the definable set $X$ is discrete and closed unless the closure $\mycl(X)$ contains a nonempty open interval.
Therefore, we have only to lead to a contradiction assuming that the closure $\mycl(X)$ contains a nonempty open interval $I$.

Take a point $a \in X$ which is also contained in the open interval $I$.
Consider the definable function $f:\{r \in R\;|\;r >0\} \rightarrow \{x \in R\;|\; x  > a\}$ defined by $f(r)= \inf \{x >a \;|\; x \in X{\langle r\rangle}\}$.
It is obvious that $f$ is a decreasing function because $\{X{\langle r\rangle}\}_{r>0}$ is increasing. 

We demonstrate that the image $\myim(f)$ of the function $f$ is discrete and closed.
As previously demonstrated, if the closure $\mycl(\myim(f))$ is discrete and closed, the image $\myim(f)$ is also discrete and closed.
We assume that $\mycl(\myim(f))$ is not discrete and closed for contradiction.
A nonempty open interval $J$ is contained in $\mycl(\myim(f))$ by the assumption of the lemma.
Take a point $b \in \myim(f) \cap J$ and a point $r>0$ with $b=f(r)$.
Since $X{\langle r\rangle}$ is closed, we have $b \in X{\langle r\rangle}$.
Any point $b' \in \myim(f)$ with $b' > b$ is also contained in $X{\langle r\rangle}$.
In fact, take a point $r'>0$ with $b'=f(r')$.
If $r'>r$, the set $X{\langle r'\rangle}$ contains the point $b$ because $X{\langle r\rangle} \subseteq X{\langle r'\rangle}$.
Then we have $b'=f(r') \leq b$ by the definition of the function $f$, and this is a contradiction.
If $r'<r$, we have $b' \in X{\langle r'\rangle} \subseteq X{\langle r\rangle}$.

Set $b_1 = \inf\{b' \in \myim(f)\;|\;b'>b\} $.
We have $b_1 \in X{\langle r\rangle}$ and $b_1> b$ because $\{b' \in \myim(f)\;|\;b'>b\} \subseteq X{\langle r\rangle}$ and $X{\langle r\rangle}$ is closed and discrete.
The open interval $(b,b_1)$ has an empty intersection with $\myim(f)$.
It contradicts the assumption that $\mycl(\myim(f))$ contains the open interval $J$ with $b \in J$.
We have demonstrated that $\myim(f)$ is discrete and closed.

Let $c$ be the infimum of $\myim(f)$.
We have $c>a$ and $c \in \myim(f)$ because $\myim(f)$ is discrete and closed.
On the other hand, there exists $b \in X$ with $a<b<c$ because the closure $\mycl(X)$ contains the open interval $I$ with $a \in I$.
Take a point $r>0$ with $b \in X{\langle r\rangle}$.
By the definition of $f$, we have $a < f(r) \leq b<c$. 
It contradicts the definition of $c$.
\end{proof}

We provide corollaries of Lemma \ref{lem:basic1}.

\begin{corollary}\label{cor:basic1}
Let $\mathcal R$ be as in Lemma \ref{lem:basic1}.
The structure $\mathcal R$ possesses the property $(a)$ in Definition \ref{def:technical}.
Furthermore, an arbitrary $\myds$-subset of $R$ is either discrete and closed or contains a nonempty open interval.
\end{corollary}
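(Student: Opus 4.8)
The plan is to reduce everything to two applications of Lemma \ref{lem:basic1}, exploiting the two-parameter monotonicity built into a $\myds$-family. Let $X$ be a $\myds$-subset of $R$ and fix a $\myds$-family $\{X\langle r,s\rangle\}_{r>0,s>0}$ with $X=\bigcup_{r>0,s>0}X\langle r,s\rangle$. The first observation I would record is that each $X\langle r,s\rangle$ is a CBD subset of $R$, so by the hypothesis on $\mathcal R$ it is either discrete (and closed) or contains a nonempty open interval; moreover, since $R$ is densely ordered, a discrete closed subset of $R$ cannot contain an open interval and hence has empty interior, so for a CBD subset of $R$ the conditions ``empty interior'' and ``discrete and closed'' are equivalent.

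From here I would split into two cases. If some $X\langle r_0,s_0\rangle$ contains a nonempty open interval, then so does $X\supseteq X\langle r_0,s_0\rangle$, and we are in the second alternative of the dichotomy asserted by the ``furthermore'' clause; note that in this case the hypothesis of property $(a)$ is not satisfied, so property $(a)$ imposes no constraint. The remaining case is that every $X\langle r,s\rangle$ has empty interior, hence is discrete and closed. This is precisely the situation addressed by property $(a)$, and it is also the case I must analyze to complete the dichotomy, so both claims are handled at once.

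In this discrete case I would peel off the two parameters one at a time. Fixing $s>0$, the family $\{X\langle r,s\rangle\}_{r>0}$ is an increasing parameterized family of definable, discrete and closed subsets of $R$, so Lemma \ref{lem:basic1} shows that $Y\langle s\rangle:=\bigcup_{r>0}X\langle r,s\rangle$ is discrete and closed. This $Y\langle s\rangle$ is definable uniformly in $s$: if $\mathcal X$ is a definable set whose fibers give the $\myds$-family, then $Y\langle s\rangle=\{y\in R\;|\;\exists r>0,\ (r,s,y)\in\mathcal X\}$, so $\{Y\langle s\rangle\}_{s>0}$ is a genuine definable parameterized family. Because $X\langle r,s'\rangle\subseteq X\langle r,s\rangle$ for $s<s'$, the family $\{Y\langle s\rangle\}_{s>0}$ is decreasing. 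Applying Lemma \ref{lem:basic1} a second time yields that $X=\bigcup_{s>0}Y\langle s\rangle$ is discrete and closed; in particular $X$ has empty interior, which establishes property $(a)$, and combined with the first case it gives the dichotomy of the ``furthermore'' statement.

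The step I expect to require the most care is the second application of Lemma \ref{lem:basic1}: I must verify that the intermediate sets $Y\langle s\rangle$ really form a definable parameterized family, so that the hypothesis of Lemma \ref{lem:basic1} is legitimately met, and that the monotonicity inherited from the $\myds$-family points in the decreasing direction in $s$ rather than the increasing one. Both points are routine, but they are exactly where the two inclusions in the definition of a $\myds$-family are used, so I would state them explicitly rather than leave them implicit.
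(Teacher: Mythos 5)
Your proof is correct and follows essentially the same route as the paper: both reduce the claim to two successive applications of Lemma \ref{lem:basic1}, peeling off one parameter of the $\myds$-family at a time (you fix $s$ first and union over $r$, the paper fixes $r$ first and unions over $s$, which is immaterial). Your explicit checks of definability of the intermediate family and of the direction of monotonicity are details the paper leaves implicit, but the argument is the same.
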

\begin{proof}
Let $X$ be a $\myds$-subset of $R$ and $\{X\langle r,s \rangle\}_{r>0,s>0}$ be a $\myds$-family with $X=\bigcup_{r,s}X\langle r,s \rangle$.
If the CBD set $X\langle r,s \rangle$ contains a nonempty open interval for some $r>0$ and $s>0$, the union $X$ also contain an open interval.
We have only to show that $X$ is discrete and closed if the CBD sets $X\langle r,s \rangle$ do not contain an open interval for all $r>0$ and $s>0$.
The CBD sets $X\langle r,s \rangle$ are discrete and closed by the assumption.
When $r$ is fixed, the parameterized family $\{X\langle r,s \rangle\}_{s>0}$ is decreasing.
The union $X\langle r \rangle=\bigcup_{s>0}X\langle r,s \rangle$ is discrete and closed by Lemma \ref{lem:basic1}.
Apply Lemma \ref{lem:basic1} again to the increasing parameterized family $\{X\langle r \rangle\}_{r>0}$.
The $\myds$-set $X=\bigcup_{r>0}X\langle r \rangle$ is discrete and closed.
\end{proof}

\begin{corollary}\label{cor:basic2}
Let $\mathcal R$ be as in Lemma \ref{lem:basic1}.
The structure $\mathcal R$ possesses the property $(b)$ in Definition \ref{def:technical}.
\end{corollary}
\begin{proof}
Immediate by the `furthermore' part of Corollary \ref{cor:basic1}.
\end{proof}

Using the previous corollaries, we get the following lemma:

\begin{lemma}\label{lem:good_point_pre}
Let $\mathcal R$ be as in Lemma \ref{lem:basic1} and $X$ be a $\myds$-subset of $R^n$ of dimension $d$.
Take a coordinate projection $\pi:X \rightarrow R^d$ such that $\pi(X)$ has a nonempty interior.
Then, there exists a $\myds$-subset $Z$ of $R^d$ such that $Z$ has an empty interior and the fiber $X \cap \pi^{-1}(x)$ is discrete and closed for any $x \in \pi(X) \setminus Z$.
\end{lemma}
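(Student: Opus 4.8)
The plan is to reduce the statement to the one-variable machinery of Lemma \ref{lem:key} by bridging the notion ``discrete and closed'' for fibers living in $R^{n-d}$ to interiors of single-coordinate projections. I may assume without loss of generality that $\pi$ is the projection onto the first $d$ coordinates, so that each fiber $X_x$ is a subset of $R^{n-d}$ with coordinates indexed by $d+1,\dots,n$. The first step is to record the following bridge: for a nonempty $\myds$-subset $Y$ of $R^{n-d}$, if every one-coordinate projection $q_j(Y)\subseteq R$ ($d+1\le j\le n$) has empty interior, then $Y$ is discrete and closed. Indeed, each $q_j(Y)$ is a $\myds$-subset of $R$ by Lemma \ref{lem:quo}(1), so by Corollary \ref{cor:basic1} an empty interior forces it to be discrete and closed; since $Y\subseteq \prod_{j} q_j(Y)$ and a finite product of discrete closed sets is discrete and closed, the definable subset $Y$ is itself discrete and closed. (Unwinding the dimension definition, the hypothesis says precisely that $\dim Y=0$.)

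Next I would define the exceptional set. For each $j$ with $d+1\le j\le n$, let $\rho_j\colon R^n\to R^{d+1}$ be the coordinate projection onto the coordinates $1,\dots,d,j$; then $\rho_j(X)$ is a $\myds$-set by Lemma \ref{lem:quo}(1), and its fiber over $x$ is precisely $q_j(X_x)$. Put
\[
Z=\bigcup_{j=d+1}^{n}\mathcal I(\rho_j(X)).
\]
Each $\mathcal I(\rho_j(X))$ is $\myds$ by Lemma \ref{lem:key}$(B)_{d+1}$, and finite unions of $\myds$-sets are $\myds$ by Lemma \ref{lem:quo}(2), so $Z$ is $\myds$. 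Moreover, by Corollary \ref{cor:basic1}, $x\in\mathcal I(\rho_j(X))$ exactly when $q_j(X_x)$ has nonempty interior; hence for $x\in\pi(X)\setminus Z$ every $q_j(X_x)$ has empty interior, and the bridge of the previous paragraph shows $X_x$ is discrete and closed, as required.

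It remains to see that $Z$ has empty interior, and this is where the corollaries of the first key lemma do the real work. Suppose some $\mathcal I(\rho_j(X))$ had nonempty interior. Then Corollary \ref{cor:key1} applied to the $\myds$-set $\rho_j(X)\subseteq R^{d+1}$ would give that $\rho_j(X)$ has nonempty interior, whence $\dim\rho_j(X)=d+1$; but $\rho_j(X)$ is a projection of $X$, so $\dim\rho_j(X)\le\dim X=d$, a contradiction. Thus each of the finitely many $\myds$-sets $\mathcal I(\rho_j(X))$ has empty interior, and Corollary \ref{cor:key2} (available since $\mathcal R$ enjoys properties $(a)$ and $(b)$ by Corollaries \ref{cor:basic1} and \ref{cor:basic2}) shows their union $Z$ has empty interior as well.

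I expect the main obstacle to be the bridge in the first paragraph: the entire toolkit of Section \ref{sec:key} is phrased in terms of interiors in $R$ and $R^d$, so the crux is to convert the genuinely multidimensional condition ``$X_x$ is discrete and closed in $R^{n-d}$'' into a finite conjunction of one-variable interior conditions that $\mathcal I$ and property $(a)$ can handle. Once the product-of-projections reduction is in place, defining $Z$ and verifying its two required properties become direct applications of Corollaries \ref{cor:key1} and \ref{cor:key2}.
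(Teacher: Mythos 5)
Your proof is correct, but it takes a genuinely different route from the paper's. The paper factors $\pi$ into a chain of single-coordinate-forgetting projections $R^n\to R^{n-1}\to\cdots\to R^d$, takes $T_i=\mathcal I(\Pi_{i-1}(X))$ at each intermediate level, pushes these down to $R^d$ via the remaining projections $\Phi_i$ to form $Z$, and then proves discreteness of the good fibers by an inductive construction of nested open boxes $B_{n-d},\dots,B_0$ shrinking around an arbitrary point of the fiber. You instead project directly onto the $(d{+}1)$-dimensional coordinate subspaces spanned by the base coordinates and one fiber coordinate, so your exceptional sets $\mathcal I(\rho_j(X))$ already live in $R^d$ and no intermediate push-down is needed; and you replace the box construction by embedding the fiber $X_x$ in the product $\prod_j q_j(X_x)$ of discrete closed subsets of $R$ (the same product trick the paper uses in the $d=0$ case of Theorem~\ref{thm:constructible}). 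Your argument that $Z$ has empty interior is also tighter: $\myint(\mathcal I(\rho_j(X)))\neq\emptyset$ would give $\myint(\rho_j(X))\neq\emptyset$ by Corollary~\ref{cor:key1} and hence $\dim X\geq d+1$ directly from the definition of dimension, whereas the paper's corresponding step asserts $\dim T_i<\dim\Pi_{i-1}(X)$ for sets living in intermediate spaces, which requires a little more unwinding. Both proofs draw on exactly the same toolkit (Lemma~\ref{lem:key}, Corollaries~\ref{cor:key1}, \ref{cor:key2}, \ref{cor:basic1}, \ref{cor:basic2}); yours is the shorter and, in my view, cleaner of the two. The one point worth stating explicitly in your write-up is that $X_x$ is itself a $\myds$-set (Lemma~\ref{lem:quo}(2), fibers of $\myds$-sets are $\myds$), so that $q_j(X_x)$ is a $\myds$-subset of $R$ to which the dichotomy of Corollary~\ref{cor:basic1} applies; you use this implicitly in the bridge.
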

\begin{proof}
This lemma is proven in the same manner as \cite[Lemma 5.6]{Fuji4}.
We give a proof here for readers' convenience.
The assumptions of the corollaries in Section \ref{sec:keysub} are satisfied thanks to Corollary \ref{cor:basic1} and Corollary \ref{cor:basic2}.

For all $1 \leq i \leq n-d$, we can take coordinate projections $\pi_i:R^{n-i+1} \rightarrow R^{n-i}$ with $\pi = \pi_{n-d} \circ \cdots \circ \pi_1$.
We may assume that $\pi_i$ are the coordinate projections forgetting the last coordinate without loss of generality.
Set $\Pi_i=\pi_i \circ \cdots \circ \pi_1$ and $\Phi_i=\pi_{n-d} \circ \cdots \circ \pi_{i+1}$.
Consider the sets $T_i = \{x \in R^{n-i}\;|\; \pi_i^{-1}(x) \cap \Pi_{i-1}(X) \text{ contains a nonempty open interval}\}$.
The sets $T_i$ are $\myds$ and we have $\dim(T_i) < \dim \Pi_{i-1}(X) = \dim X = d$ by Corollary \ref{cor:key1}.
Set $U_i=\Phi_i(T_i) \subseteq R^d$ for all $1 \leq i \leq n-d$.
The projection images $U_i$ are $\myds$-sets.
We get $\myint(U_i) = \emptyset$ because $\dim(T_i) < d$.
Set $Z=\bigcup_{i=1}^{n-d} U_i$.
It also has an empty interior by Corollary \ref{cor:key2}.

The fiber $X \cap \pi^{-1}(x)$ is discrete and closed for any $x \in \pi(X) \setminus Z$.
In fact, let $y \in R^n$ be an arbitrary point with $x=\pi(y)$.
Set $y_0=y$ and $y_i=\Pi_i(y)$ for $1 \leq i \leq n-d$.
We have $y_{n-d}=x$ by the definition.
We construct an open box $B_i$ in $R^{n-d-i}$ for $0 \leq i \leq n-d$ such that $y_i \in B_i$ and $(\{x\} \times B_i) \cap \Pi_i(X)$ consists of at most one point in decreasing order.
When $i=n-d$, the open box $B_{n-d}=R^0$.
When $(\{x\} \times B_i) \cap \Pi_i(X)=\emptyset$, set $B_{i-1}=B_i \times R$.
We have $(\{x\} \times B_{i-1}) \cap \Pi_{i-1}(X)=\emptyset$.
When $(\{x\} \times B_i) \cap \Pi_i(X)\not=\emptyset$, the fiber $\Pi_{i-1}(X) \cap \pi_i^{-1}(y_i)$ is discrete and closed by Corollary \ref{cor:basic1}.
Therefore, there exists an open box $B_{i-1}$ in $R^{n-d+1-i}$ such that $\pi_i(B_{i-1})=B_i$, $y_{i-1} \in B_{i-1}$ and $(\{x\} \times B_{i-1}) \cap \Pi_{i-1}(X)$ consists of at most one point.
We have constructed the open boxes $B_i$ in $R^{n-d-i}$ for all $0 \leq i \leq n-d$.
The existence of $B_0$ implies that $X \cap \pi^{-1}(x)$ is discrete and closed.
\end{proof}

\section{Proof of Theorem \ref{thm:main}}\label{sec:proof}
Preparation has been done.
We finally get the following theorem:
\begin{theorem}\label{thm:constructible}
Let $\mathcal R=(R,<,+,0,\ldots)$ be a definably complete expansion of a densely linearly ordered abelian group such that an arbitrary definable closed subset of $R$ is either  discrete or contains a nonempty open interval.
A $\myds$-set is constructible.
\end{theorem}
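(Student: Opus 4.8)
The plan is to argue by induction on the full dimension $\myfdim(X)=\langle d,k\rangle$, ordered lexicographically (the case $d=-\infty$ being the empty set, which is constructible). Throughout I would use freely that every definable open set and every definable closed set is constructible, so that the whole difficulty lies in controlling the ``boundary'' of a $\myds$-set $X$; I would also use that the closure of a definable set, being closed, is $\myds$, and that finite intersections of $\myds$-sets are $\myds$ by Lemma \ref{lem:quo}. The prototype is the one-dimensional case: writing $X=\myint(X)\cup(X\setminus\myint(X))$, the set $\myint(X)$ is open and $X\setminus\myint(X)=X\cap\mycl(R\setminus X)$ is a $\myds$-subset of $R$ with empty interior, hence discrete and closed by Corollary \ref{cor:basic1}; both pieces are constructible, so $X$ is. The higher-dimensional argument is an elaboration of this peeling, in which the remainder is no longer one-dimensional and must be pushed down along a projection.

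First I would dispose of the top-dimensional situation. If $\myint(X)\neq\emptyset$, that is $d=n$, then $\myint(X)$ is open and the remainder $X\setminus\myint(X)=X\cap\mycl(R^n\setminus X)$ is a $\myds$-set (an intersection of two $\myds$-sets) with empty interior, so $\dim(X\setminus\myint(X))<n$ and hence $\myfdim(X\setminus\myint(X))<_{\mathrm{lex}}\myfdim(X)$; the induction hypothesis makes it constructible, and therefore so is $X$. Thus I may assume $\myint(X)=\emptyset$ and $d<n$. I would then choose a coordinate projection $\pi\colon R^n\to R^d$ with $\myint(\pi(X))\neq\emptyset$ and apply Lemma \ref{lem:good_point_pre}: there is a $\myds$-set $Z\subseteq R^d$ with $\myint(Z)=\emptyset$ such that the fiber $X\cap\pi^{-1}(x)$ is discrete and closed for every $x\in G:=\pi(X)\setminus Z$.

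Next I would split $X=E\cup M$ with $E=X\cap\pi^{-1}(Z)$ and $M=X\cap\pi^{-1}(G)$, both $\myds$ by Lemma \ref{lem:quo}. For the exceptional part $E$, the image $\pi(E)\subseteq Z$ has empty interior, so $\pi$ does not realize the dimension of $E$; since $E\subseteq X$, any coordinate projection whose restriction to $E$ has image of nonempty interior does the same for $X$. Hence the set of dimension-realizing projections for $E$ is contained in that for $X$ but omits $\pi$, so the count drops strictly whenever $\dim E=d$, while $\dim E<d$ otherwise. In either case $\myfdim(E)<_{\mathrm{lex}}\myfdim(X)$, and $E$ is constructible by the induction hypothesis.

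The crux is the main stratum $M$, which lies over the full-dimensional base $G\subseteq R^d$ with discrete and closed fibers, and this is where I expect the main obstacle, since local definable cell decomposition is not available here. The intended mechanism is a generic local-closedness statement: the locus of $G$ over which $M$ fails to be locally closed — where the discrete fibers ``jump'' or accumulate toward the frontier — should project to a subset of $R^d$ with empty interior, and so be absorbed into a set of strictly smaller full dimension handled by the induction, leaving $M$ locally closed, and therefore constructible, over the complementary region. Making this precise is exactly where Corollaries \ref{cor:key1}, \ref{cor:key2} and \ref{cor:key4}, together with definable completeness and Proposition \ref{prop:def_compact}, would have to be combined to force the bad locus to be nowhere dense in $R^d$ without invoking cell decomposition. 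Once $M$ is shown constructible, $X=E\cup M$ is constructible and the induction closes.
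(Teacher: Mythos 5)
Your setup matches the paper's: induction on $\myfdim$, the reduction via Lemma \ref{lem:good_point_pre} to the stratum with discrete closed fibers, and the observation that the exceptional part $\pi^{-1}(Z)\cap X$ drops in full dimension are all exactly what the paper does. But the proposal stops precisely where the proof begins. The entire content of the theorem is the ``generic local-closedness statement'' that you describe as the intended mechanism and then leave as a declaration: you write that the bad locus \emph{should} project to a set with empty interior and that the corollaries \emph{would have to be combined} to show this, without actually doing it. That is the theorem's only nontrivial step, and naming the tools is not a proof of it.

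For the record, here is the mechanism the paper uses, so you can see what is missing. Let $Y=X\setminus\mylc(X)$ and suppose $\pi(Y)$ has nonempty interior. Corollary \ref{cor:key3} gives a bounded open box $B$ all of whose fibers meet $Y$. The first real step is a definable-compactness argument (Proposition \ref{prop:def_compact}): for each $u\in\pi(B)$ the fiber $T_u=X\cap B\cap\pi^{-1}(u)$ is discrete, closed and bounded, so the filtered collection $T_u\setminus X\langle r,s\rangle$ must have empty members, i.e.\ each fiber is swallowed by a \emph{single} $X\langle r_u,s_u\rangle$. The second step converts this pointwise statement into a uniform one: setting $V\langle r,s\rangle=\pi(B)\setminus\pi\bigl((X\cap B)\setminus X\langle r,s\rangle\bigr)$ and $W\langle r,s\rangle=\mycl(V\langle r,s\rangle)$, one gets a $\myds$-family whose union contains the open box $\pi(B)$; Lemma \ref{lem:key}$(A)_d$ then forces some $W\langle \widetilde r,\widetilde s\rangle$ to have nonempty interior, Corollary \ref{cor:key4} transfers this to $V\langle \widetilde r,\widetilde s\rangle$ (which is constructible by the induction hypothesis applied to $\pi(D\langle r,s\rangle)$), and after shrinking $B$ one concludes $X\cap B=X\langle\widetilde r,\widetilde s\rangle\cap B$, which is locally closed --- contradicting that every fiber of $B$ meets $Y$. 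Note in particular that the induction hypothesis is invoked \emph{inside} this argument (to know $\pi(D\langle r,s\rangle)$, hence $V\langle r,s\rangle$, is constructible before Corollary \ref{cor:key4} can be applied), so the step cannot be treated as a self-contained lemma appended after the induction is set up. Without this chain your proposal is an outline of the theorem, not a proof.
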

\begin{proof}
We prove the theorem using the technique employed  in \cite{F}.
Note that the assumptions of the lemma and the corollaries in Section \ref{sec:keysub} are satisfied thanks to Corollary \ref{cor:basic1} and Corollary \ref{cor:basic2}.
We use this fact without notice in the proof.

Let $X$ be a $\myds$-subset of $R$ and $\{X\langle r,s \rangle\}_{r>0,s>0}$ be a $\myds$-family with $X=\bigcup_{r,s}X\langle r,s \rangle$.
We demonstrate the theorem by induction on the full dimension $\myfdim(X)$ of $X$.
Set $d=\dim X$.
We first consider the case in which $d=0$.
Let $\pi_i:R^n \rightarrow R$ be the projections onto the $i$-th coordinate for all $1 \leq i \leq n$.
The projection images $\pi_i(X)$ are discrete and closed for all $1 \leq i \leq n$ by Corollary \ref{cor:basic1} because $\pi_i(X)$ do not contain an open interval by the definition of dimension.
The set $X$ is a subset of the discrete and closed set $\prod_{i=1}^n \pi_i(X)$.
Therefore, it is also closed and discrete.
In particular, it is constructible.

Let us consider the case in which $d>0$.
There exists a coordinate projection $\pi:R^n \rightarrow R^d$ such that the image $\pi(X)$ has a nonempty interior.
We can take a $\myds$-subset $Z$ of $R^d$ such that $\myint(Z)=\emptyset$ and the fiber $X \cap \pi^{-1}(x)$ is discrete and closed for any $x \in \pi(X) \setminus Z$ by Lemma \ref{lem:good_point_pre}.
It is obvious that $\myfdim(\pi^{-1}(Z) \cap X)<\myfdim X$. 
The $\myds$-set $\pi^{-1}(Z) \cap X$ is constructible by the induction hypothesis.
We have only to demonstrate that $X \setminus (\pi^{-1}(Z) \cap X)$ is constructible.
Therefore, we may assume that $\pi^{-1}(x)$ is discrete and closed for any $x \in \pi(X)$.
 
The notation $\mylc(X)$ denotes the set of points having open boxes $U$ containing the points such that the intersections $U \cap X$ are locally closed.
The set $\mylc(X)$ is definable and locally closed by its definition.
We show that $\pi(Y)$ has an empty interior, where $Y=X \setminus \mylc(X)$.
Once it is proven, the theorem immediately follows from the induction hypothesis.
For contradiction, we assume that $\pi(Y)$ has a nonempty interior.
By Corollary \ref{cor:key3}, there exists a bounded open box $B$ such that, 
\begin{description}
\item[$(*)$] for any $u \in \pi(B)$, the fiber $Y \cap B \cap \pi^{-1}(u)$ of $Y \cap B$ is not an empty set.
\end{description}

We next demonstrate that, for any $u \in \pi(B)$, there exist $r_u>0$ and $s_u>0$ such that $T_u:=X \cap U \cap \pi^{-1}(u)$ is contained in the CBD set $X\langle r_u,s_u \rangle$.
We fix $u \in \pi(B)$.
Set $C \langle r,s \rangle = T_u \setminus  X\langle r,s \rangle$ for all $r>0$ and $s>0$.
We have only to show that $C \langle r,s \rangle$ is an empty set for some $r>0$ and $s>0$.
Assume the contrary.
Since $T_u$ is discrete and closed, the set $C \langle r,s \rangle$ is also discrete and closed.
Since the parameterized family $\{C \langle r,s \rangle\}_{r>0,s>0}$ is a definable filtered collection of nonempty definable closed subsets of the bounded closed definable set $T_u$, the intersection $\bigcap_{r>0,s>0} C \langle r,s \rangle$ is not an empty set by Proposition \ref{prop:def_compact}.
It contradicts the equality $X=\bigcup_{r>0,s>0} X \langle r,s \rangle$.

Set 
\begin{align*}
& D\langle r,s \rangle= (X \cap B) \setminus X\langle r,s \rangle,\\
& V\langle r,s \rangle = \pi(B) \setminus \pi(D\langle r,s \rangle) \text{ and }\\
&W\langle r,s \rangle = \mycl(V\langle r,s \rangle).
\end{align*}
Note that the $\myds$-set $\pi(D\langle r,s \rangle)$ is constructible by the induction hypothesis.
The set $V\langle r,s \rangle$ is also constructible and its closure $W\langle r,s \rangle$ is a CBD set for any $r>0$ and $s>0$.
The existence of $r_u>0$ and $s_u>0$ for any $u \in \pi(B)$ implies the equality $\pi(B)=\bigcup_{r>0,s>0} V \langle r,s \rangle$.
In particular, the parameterized family $\{W\langle r,s \rangle\}_{r>0,s>0}$ of CBD sets is a $\myds$-family and the $\myds$-set $W=\bigcup_{r>0,s>0} W \langle r,s \rangle$ contains an open box $\pi(B)$.
Since $W$ has a nonempty interior, the CBD set $W \langle \widetilde{r},\widetilde{s} \rangle$ has a nonempty interior for some $\widetilde{r}>0$ and $\widetilde{s}>0$ by Lemma \ref{lem:key}$(A)_d$.
Since $V\langle \widetilde{r},\widetilde{s} \rangle$ is constructible, it also has a nonempty interior by Corollary \ref{cor:key4}.
By shrinking $\pi(B)$ if necessary, we may assume that $B$ satisfies the condition $(*)$ and $\pi(B)$ is contained in $V\langle \widetilde{r},\widetilde{s} \rangle$.
It means that $X \cap B$ is contained in $X\langle \widetilde{r},\widetilde{s} \rangle$.
We have $X \cap B=X\langle \widetilde{r},\widetilde{s} \rangle \cap B$, which implies that $X \cap B$ is locally closed because $X\langle \widetilde{r},\widetilde{s} \rangle$ is closed.
On the other hand, the condition $(*)$ implies that $X\cap B$ has a nonempty intersection with $Y$.
It contradicts the definition of $Y$.
\end{proof}

We demonstrate Theorem \ref{thm:main} below.

\begin{proof}[Proof of Theorem \ref{thm:main}]
$(1) \Rightarrow (2)$:
Lemma \ref{lem:quo} and Theorem \ref{thm:constructible} assert that a set definable in the original structure $\mathcal R$ is definable in the open core if and only if it is $\myds$.
By Corollary \ref{cor:basic1}, any subset of $R$ definable in the open core is either discrete and closed or contains a nonempty open interval.
The open core is locally o-minimal by Lemma \ref{lem:local}.

$(2) \Rightarrow (1)$: 
Immediate by Lemma \ref{lem:local} because closed sets definable in $\mathcal R$ are definable in the open core.

The `furthermore' part follows from Theorem \ref{thm:constructible}.
\end{proof}


\begin{thebibliography}{99}
\bibitem{DM}
A. Dougherty and C. Miller,
{Definable boolean combinations of open sets are boolean combination of open definable sets},
Illinois J. Math., {45} (2001), 1347-1350.

\bibitem{DMS}
A. Dolich, C. Miller and C. Steinhorn,
{Structure having o-minimal open core},
Trans. Amer. Math. Soc., {362} (2010), 1371-1411.

\bibitem{F}
A. Fornasiero,
{Locally o-minimal structures and structures with locally o-minimal open core},
Ann. Pure Appl. Logic, {164} (2013), 211-229.

\bibitem{Fuji}
M. Fujita,
{Uniformly locally o-minimal structures and locally o-minimal structures admitting local definable cell decomposition},
Ann. Pure Appl. Logic, {171} (2020), 102756.

\bibitem{Fuji3}
M. Fujita,
{Dimension inequality for a uniformly locally o-minimal structure of the second kind},
J. Symbolic Logic, {65}, 1654-1663 (2020).

\bibitem{Fuji4}
M. Fujita,
{Uniformly locally o-minimal open core},
Math. Log. Q., to appear.

\bibitem{Fuji5}
M. Fujita,
{Locally o-minimal structures with tame topological properties},
J. Symbolic Logic, to appear.

\bibitem{J}
W. Johnson,
\emph{Fun with fields},
PhD Thesis, UC Berkeley (2016).


\bibitem{KTTT} 
T.~Kawakami, K.~Takeuchi, H.~Tanaka and A.~Tsuboi, 
{Locally o-minimal structures},
J. Math. Soc. Japan, {64}, 783-797  (2012).

\bibitem{M}
C. Miller,
{Expansions of dense linear orders with the intermediate value property},
J. Symbolic Logic, {66} (2001), 1783-1790.

\bibitem{MS}
C. Miller and P. Speissegger,
{Expansions of the real line by open sets: o-minimality and open cores},
Fund. Math., \textbf{162}, 193-208 (1999).

\bibitem{TV}
C. Toffalori and K. Vozoris, 
{Notes on local o-minimality},
Math. Log. Q., {55} (2009), 617-632.
\end{thebibliography}
\end{document}